\newtheorem*{theorem*}{Theorem}
\newtheorem{theorem}{Theorem}[section]
\newtheorem{lemma}[theorem]{Lemma}
\newtheorem{question}{Question}
\numberwithin{equation}{section}
\renewcommand{\mod}{\operatorname{mod}}
\newcommand{\Ext}{\operatorname{Ext}}
\newcommand{\cx}{\operatorname{cx}}
\newcommand{\domdim}{\operatorname{domdim}}
\newcommand{\gldim}{\operatorname{gldim}}
\newcommand{\End}{\operatorname{End}}
\newcommand{\Hom}{\operatorname{Hom}}
\newcommand{\add}{\operatorname{\mathrm{add}}}
\newcommand{\K}{\mathbf{k}}
\begin{document}

\title{Existence of a 2-cluster tilting module does not imply finite complexity}
\date{\today}

\subjclass[2010]{Primary 16G10, 16E10}

\keywords{$n$-cluster-tilting module, complexity, higher Auslander algebras}

\author{Ren\'{e} Marczinzik}
\address{Institute of algebra and number theory, University of Stuttgart, Pfaffenwaldring 57, 70569 Stuttgart, Germany}
\email{marczire@mathematik.uni-stuttgart.de}

\author{Laertis Vaso}
\address{Max Planck Institute for Mathematics, Vivatsgasse 7, 53111 Bonn, Germany}
\email{vaso@mpim-bonn.mpg.de}

\begin{abstract}
We give an example of a finite-dimensional algebra with a 2-cluster tilting module and a simple module which has infinite complexity. This answers a question of Erdmann and Holm.
\end{abstract}

\maketitle

\section*{Introduction}

In \cite{Iya}, Iyama generalised the classical correspondence between representation-finite algebras and Auslander algebras due to Auslander \cite{Aus}, see also \cite[Chapter VI.5.]{ARS}. More specifically Iyama established that finite-dimensional algebras admitting an $n$-cluster tilting module are in bijective correspondence with higher Auslander algebras. Moreover, $2$-cluster tilting modules are of special importance as they have several applications to cluster algebras via preprojective algebras (e.g. \cite{GLS}) and connections with Jacobian algebras of quivers with potential (e.g. \cite{HI}).

In general it is not easy to find $n$-cluster tilting modules. In \cite{EH} the authors show that selfinjective algebras which admit $n$-cluster tilting modules are particularly rare. More specifically, they showed that if a selfinjective algebra $A$ admits an $n$-cluster tilting module, then all $A$-modules have complexity at most $1$. It immediately follows from this that the terms in a minimal projective resolution of any $A$-module have bounded dimensions. Thus the existence of an $n$-cluster tilting module for selfinjective algebras gives global information on the whole module category. In \cite[Section 5.5]{EH} the authors posed the question whether this result holds more generally for all algebras:

\begin{question} \label{main question EH}
Let $A$ be a connected finite-dimensional algebra admitting an $n$-cluster tilting module for some $n \geq 2$. Does every $A$-module have complexity at most one?
\end{question}

Even though it is not easy to find $n$-cluster tilting modules, there are many examples for certain classes of algebras. The case of algebras with finite global dimension has attracted a lot of attention (e.g. \cite{HI, IO, Vas, CIM, Vas2}); however in this case all modules have trivially complexity equal to zero. Another well studied case is that of selfinjective algebras (e.g. \cite{EH, DI, CDIM}), where the complexity of all modules is at most one when there exists an $n$-cluster tilting module by the main result of \cite{EH}. Thus a negative answer to Question \ref{main question EH} needs to involve an algebra which is not selfinjective, has infinite global dimension and admits an $n$-cluster tilting module. 

In this article we answer Question \ref{main question EH}. In particular, the following is our main theorem which gives a negative answer to the question by Erdmann and Holm.

\begin{theorem*}
There exists a connected algebra $A$ that admits a $2$-cluster tilting module and a simple $A$-module $S$ which has infinite complexity.
\end{theorem*}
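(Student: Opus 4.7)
The plan is to construct an explicit finite-dimensional bound-quiver algebra $A = \K Q/I$, exhibit a 2-cluster tilting module $M$, and identify a simple $A$-module $S$ with unbounded Betti numbers. By the theorem of Erdmann and Holm quoted above, $A$ cannot be selfinjective; since all modules over an algebra of finite global dimension have complexity zero, $A$ must also have infinite global dimension. A natural template is therefore to glue a ``local'' subquiver, on which one vertex carries a simple module with rapidly growing syzygies, to a small additional configuration of vertices whose projective-injective modules will generate the orthogonality structure needed for a non-trivial 2-cluster tilting module.

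Given such an $A$, I would take $M$ to be the direct sum of all indecomposable projective-injective $A$-modules together with a short, explicit list of additional indecomposables dictated by the shape of $Q$. To verify that $M$ is 2-cluster tilting, one needs to prove
\[
\add M \;=\; \{ X \in \mod A : \Ext^1_A(M,X) = 0 \} \;=\; \{ X \in \mod A : \Ext^1_A(X,M) = 0 \}.
\]
Checking $\Ext^1_A(M,M) = 0$ and functorial finiteness is the routine part. The converse inclusions --- every module with vanishing $\Ext^1$ against $M$ actually lies in $\add M$ --- are the delicate step, since on the local part of $A$ there will be infinitely many indecomposable modules. My plan is to perform an idempotent reduction: localise at an idempotent that separates the ``cluster-tilting'' part of the quiver from the wild local part, so that the orthogonality testing is reduced to a finite computation on a tractable quotient.

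Having validated $M$, the remaining task is to pick a simple module $S$ at a vertex in the local portion of $Q$ and analyse the minimal projective resolution $\cdots \to P_2 \to P_1 \to P_0 \to S \to 0$. The goal is to show that $\dim_{\K} P_i$ grows faster than any polynomial in $i$, giving $\cx S = \infty$. A standard route is to set up a recursion for the syzygies $\Omega^i S$ that mirrors free word combinatorics on the arrows incident to the chosen vertex, producing exponential growth; equivalently, one exhibits a sufficiently large free subalgebra inside the Yoneda algebra $\bigoplus_i \Ext^i_A(S,S)$.

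The central obstacle will be the design of $A$ itself: one must simultaneously engineer enough ``noise'' at a single vertex to produce a simple of infinite complexity, while keeping the rest of the module category regular enough to support a 2-cluster tilting module. These two requirements pull in opposite directions, and most natural candidates satisfy only one. Once the construction is fixed, the second most demanding step will be the maximal-orthogonality verification for $M$, for which a structural understanding of $\mod A$ --- via covering theory, an idempotent reduction, or a direct combinatorial classification of the relevant indecomposables --- appears to be indispensable.
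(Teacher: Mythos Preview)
Your proposal is a research plan, not a proof: you never actually write down an algebra $A$, a module $M$, or a simple $S$. For an existence statement like this one, the entire content lies in exhibiting a concrete example and verifying its properties; a description of what such an example ``should look like'' does not constitute a proof. In particular, the steps you flag as routine (checking $\Ext^1_A(M,M)=0$, functorial finiteness) and the steps you flag as delicate (the reverse inclusions) cannot be carried out until $A$ and $M$ are fixed.

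More importantly, your guiding intuition is misleading you. You assume the example must be built by gluing a ``wild local part'' with infinitely many indecomposables onto a tame piece, and you anticipate needing idempotent reductions or covering theory to control the resulting module category. None of this is necessary. The paper's example is the radical-square-zero algebra $A=\K Q/J^2$ on the three-vertex quiver $1\leftrightarrows 2\leftrightarrows 3$. This algebra is representation-finite: its Auslander--Reiten quiver has exactly ten indecomposables, and every indecomposable is either simple or a summand of $M=A\oplus D(A)$. The $2$-cluster tilting verification is then a finite check against the AR quiver (or, equivalently, a check that $\End_A(M)$ has global dimension equal to dominant dimension equal to $3$). The simple $S_2$ satisfies $\Omega^2(S_2)\cong S_2\oplus S_2$, so its Betti numbers grow like $2^{n/2}$, giving infinite complexity. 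The two requirements you thought ``pull in opposite directions'' are in fact simultaneously met by one of the smallest possible non-selfinjective algebras of infinite global dimension.
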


\section{An algebra with a 2-cluster tilting module and a simple module with infinite complexity}

\subsection{Preliminaries}

Let $\K$ be a field. In this article by algebra we mean finite-dimensional $\K$-algebra and by module we mean finite-dimensional right module. We also assume that all algebras are connected. We assume that the reader is familiar with the basics of representation theory and homological algebra of finite-dimensional algebras; we refer for example to the textbooks \cite{ARS, ASS, SY} for an introduction.

All subcategories considered are closed under isomorphisms and $\mathbb{N}_0$ denotes the natural numbers including zero.

Let $A$ be an algebra. We denote by $\mod-A$ the category of finite-dimensional $A$-modules and by $D=\Hom_K(-,K)$ the natural duality on the module category $\mod-A$. We denote by $\tau$ and $\tau^{-}$ the Auslander--Reiten translations. For an $A$-module $M$ we denote by $\add(M)$ the \emph{additive closure of $M$}, that is the full subcategory of $\mod-A$ consisting of direct summands of $M^n$ for some $n \geq 1$.  An $A$-module $M$ is called an \emph{$n$-cluster tilting module} if 
\begin{align*}
    \add(M) &= \{X \in \mod-A \mid \Ext_A^i(M,X)=0 \ \text{for} \ 1 \leq i \leq n-1 \} \\
    &= \{X \in \mod-A \mid \Ext_A^i(X,M)=0  \ \text{for} \ 1 \leq i \leq n-1 \}
\end{align*}
Notice that in some references (e.g. \cite{Iya4, EH}) $n$-cluster tilting module are also called \emph{maximal $(n-1)$-orthogonal modules for $n \geq 2$}.

Let 
\[0 \rightarrow A \rightarrow I^0 \rightarrow I^1 \rightarrow \cdots \]
be a minimal injective coresolution of the regular module $A$. The \emph{dominant dimension} $\domdim A$ is defined as the smallest $n \geq 0$ such that $I^n$ is not projective. The \emph{global dimension} $\gldim A$ is defined as the supremum of the projective dimensions of all $A$-modules. The algebra $A$ is called a \emph{higher Auslander algebra} if $\gldim A = \domdim A$ and $\gldim A \geq 2$.

The following theorem due to Iyama (see for example \cite[Theorem 2.6]{Iya2} for a quick proof) gives a fundamental connection between $n$-cluster tilting module and higher Auslander algebras.

\begin{theorem} \label{Iyamaresult}
Let $A$ be a finite-dimensional algebra. Then $M \in \mod-A$ is an $n$-cluster tilting module if and only if the algebra $\End_A(M)$ is a higher Auslander algebra of global dimension $n+1$. 
\end{theorem}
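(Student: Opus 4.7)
The plan is to prove both implications by transporting information between $\mod-A$ and $\mod-B$, where $B=\End_A(M)$, via the left exact functor $F=\Hom_A(M,-)$, which restricts to an equivalence $\add(M)\simeq \operatorname{proj}-B$. A central auxiliary tool is the higher Morita--Tachikawa correspondence, which asserts that for a generator-cogenerator $M$ with $B=\End_A(M)$ one has $\domdim B\ge n+1$ if and only if $\Ext_A^i(M,M)=0$ for $1\le i\le n-1$; this identifies the dominant dimension of $B$ with the length of the Ext-orthogonality enjoyed by $M$.

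For the direction ($\Rightarrow$), assume $M$ is $n$-cluster tilting. First, I would note that $M$ is automatically a generator-cogenerator: since $A$ is projective and $DA$ is injective, $\Ext_A^i(M,A)=0=\Ext_A^i(DA,M)$ for $i\ge 1$, hence both $A$ and $DA$ lie in $\add(M)$. Next, the main technical step is to construct, for every $X\in \mod-A$, an exact sequence
\[
0 \to M_n \to M_{n-1} \to \cdots \to M_0 \to X \to 0
\]
with $M_i\in\add(M)$, obtained by iteratively taking right $\add(M)$-approximations; the $n$-cluster tilting hypothesis, combined with the Ext-vanishing property, guarantees that after $n$ steps the remaining syzygy still lies in $\add(M)$, and moreover that $F$ preserves exactness at each step. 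Applying $F$ then yields a projective resolution of $F(X)$ of length at most $n$ over $B$. Since every $B$-module fits into an exact sequence $F(M_1)\to F(M_0)\to N\to 0$ coming from a map in $\add(M)$, one can extend this bound to $\gldim B\le n+1$. Dually, a left $\add(M)$-resolution of $A$ of length $n$ produces, via $F$, the first $n+1$ terms of a minimal injective coresolution of $B$ consisting of projective-injectives, giving $\domdim B\ge n+1$. Combining with the standard inequality $\domdim B\le \gldim B$ under finite global dimension forces $\gldim B=\domdim B=n+1$.

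For the direction ($\Leftarrow$), assume $B$ is higher Auslander with $\gldim B=\domdim B=n+1$. By the classical Morita--Tachikawa correspondence (applicable since $\domdim B\ge 2$) there is a generator-cogenerator $M'$ of some algebra $A'$ with $B\cong \End_{A'}(M')$; up to Morita equivalence we identify $M$ with $M'$. The higher Morita--Tachikawa statement then translates $\domdim B\ge n+1$ into $\Ext_A^i(M,M)=0$ for $1\le i\le n-1$, so $\add(M)$ is contained in both Ext-orthogonal subcategories defining $n$-cluster tilting. For the reverse inclusion, take $X$ with $\Ext_A^i(M,X)=0$ for $1\le i\le n-1$. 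Using $\gldim B=n+1$, I would take a minimal projective resolution of $F(X)$ of length at most $n+1$, lift it through the equivalence $\add(M)\simeq\operatorname{proj}-B$ to a complex $M_\bullet$ in $\add(M)$, and show by induction on the syzygies that the Ext-vanishing of $X$ against $M$ forces $M_\bullet$ to be exact in $\mod-A$ with $X$ as its cokernel, hence $X\in\add(M)$. The symmetric argument, using left resolutions and injective coresolutions of $F(X)$, handles the other Ext-orthogonality condition.

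The main obstacle is the recurring mismatch between the left exactness of $F$ and the homological conditions that must be tracked: translating a projective resolution over $B$ into an $\add(M)$-resolution in $\mod-A$, and vice versa, requires that at every step the relevant kernels or cokernels satisfy the appropriate Ext-vanishing against $M$. This is exactly the content encoded in the $n$-cluster tilting hypothesis on the one side and in the equality $\gldim B=\domdim B=n+1$ on the other, and matching these two pieces of data is where the core of the proof lies.
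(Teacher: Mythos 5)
The paper offers no proof of this statement; it is quoted from Iyama with a pointer to \cite[Theorem 2.6]{Iya2}, so the comparison is with the standard argument. Your plan for the direction ($\Rightarrow$) is essentially that argument and is sound in outline: $M$ is a generator--cogenerator, iterated right $\add(M)$-approximations yield finite exact $\add(M)$-resolutions whose images under $F=\Hom_A(M,-)$ bound $\gldim \End_A(M)$, and M\"uller's theorem (your ``higher Morita--Tachikawa'') gives $\domdim \End_A(M)\ge n+1$. Two slips there: the Ext groups you write for the generator--cogenerator claim have the arguments in the wrong order (projectivity of $A$ gives $\Ext_A^i(A,M)=0$, hence $A\in\add(M)$ by the \emph{second} orthogonality condition, and injectivity of $D(A)$ gives $\Ext_A^i(M,D(A))=0$, hence $D(A)\in\add(M)$ by the \emph{first}; the groups $\Ext_A^i(M,A)$ and $\Ext_A^i(D(A),M)$ do not vanish for formal reasons); and the $\add(M)$-resolution produced by the approximation process has length $n-1$, not $n$ --- with your count the two-step presentation argument only yields $\gldim \End_A(M)\le n+2$.

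The direction ($\Leftarrow$) contains the genuine gap. Your final step lifts a projective resolution of $F(X)$ to an exact complex $0\to M_{n+1}\to\cdots\to M_0\to X\to 0$ in $\add(M)$ and concludes ``hence $X\in\add(M)$''. This is a non sequitur: \emph{every} $A$-module admits a finite exact $\add(M)$-resolution (that is exactly what you established in the other direction), so exhibiting one for $X$ says nothing about membership in $\add(M)$. What is needed is to show that $F(X)$ is a \emph{projective} $\End_A(M)$-module and then use full faithfulness of $F$ on $\mod-A$ (valid because $M$ is a generator) to get $X\in\add(M)$. The standard route applies $F$ to an injective coresolution $0\to X\to I^0\to I^1\to\cdots$: the hypothesis $\Ext_A^i(M,X)=0$ for $1\le i\le n-1$ makes $0\to F(X)\to F(I^0)\to\cdots\to F(I^{n-1})\to F(I^n)$ exact with every $F(I^j)$ projective-injective, so $F(X)$ is an $n$-th syzygy of a submodule of a projective; since $\gldim \End_A(M)=n+1$, such submodules have projective dimension at most $n$, forcing $F(X)$ to be projective. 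Separately, your appeal to Morita--Tachikawa to ``identify $M$ with $M'$'' silently assumes that $M$ is a generator--cogenerator, which in this direction is a hypothesis rather than a consequence: the literal statement fails without it (take $A$ to be a non-semisimple higher Auslander algebra of global dimension $n+1$ and $M=A$; then $\End_A(M)=A$ is higher Auslander, but $\{X\mid \Ext_A^i(M,X)=0,\ 1\le i\le n-1\}=\mod-A\neq\add(M)$). This is harmless for the paper, where $M=A\oplus D(A)$ is visibly a generator--cogenerator, but it must be made explicit in any proof of the stated equivalence.
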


Recall that the \emph{complexity} $\cx(M)$ of a module $M$ with minimal projective resolution 
\[\cdots \rightarrow P_n \rightarrow P_{n-1} \rightarrow \cdots \rightarrow P_1 \rightarrow P_0 \rightarrow M \rightarrow 0\]
is defined as 
\[\cx(M)= \inf \{ b \in \mathbb{N}_0 \mid \exists\; c >0: \dim P_n \leq c n^{b-1} \ \text{for all} \ n \}.\]
Thus the complexity of a module $M$ is at most one if and only if the terms $P_n$ of a projective resolution of $M$ have bounded vector space dimensions.
When no $b \in \mathbb{N}_0$ exists with $\dim P_n \leq c n^{b-1} \ \text{for all} \ n$ for some $c>0$ then the complexity of a module $M$ is infinite.

\subsection{Main result}

For the rest of this section, let $Q$ be the quiver
\[
\begin{tikzcd}
     1 \arrow[bend left=30, r] & 2 \arrow[bend left=30, l] \arrow[bend left=30, r] & 3, \arrow[bend left=30, l]
\end{tikzcd}
\]
and let $A=\K Q/ J^2$ where $J$ is the ideal of $\K Q$ generated by the arrows. We denote by $S_i$ the simple $A$-module corresponding to the vertex $i\in Q_0$. 

\begin{lemma}\label{lem:2-ct}
The module $M:=A\oplus D(A)$ is a $2$-cluster tilting module.
\end{lemma}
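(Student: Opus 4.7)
The plan is to enumerate the indecomposable $A$-modules, identify $\add(M)$, and verify both defining conditions of a 2-cluster tilting module by direct calculation.

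Since $A=\K Q/J^2$ has radical square zero, its indecomposable modules correspond via the classical separated-quiver construction to indecomposable representations of the separated quiver of $Q$. In our case that separated quiver is a disjoint union of two Dynkin type $A_3$ quivers (with middle vertex $2^-$ and $2^+$ respectively), so $A$ is representation finite with exactly nine pairwise non-isomorphic indecomposables: the simples $S_1,S_2,S_3$, the indecomposable projectives $P_1,P_2,P_3$, and the indecomposable injectives $I_1,I_2,I_3$. No $P_i$ is isomorphic to any $I_j$, so $\add(M)$ consists precisely of the six non-simple indecomposables. Hence $M$ is $2$-cluster tilting if and only if $\Ext^1_A(M,M)=0$ and, for each $i\in\{1,2,3\}$, both $\Ext^1_A(M,S_i)$ and $\Ext^1_A(S_i,M)$ are non-zero.

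All the relevant Ext-groups are computed via the cokernel formula $\Ext^1_A(X,Y)=\mathrm{coker}(\Hom_A(P_0,Y)\to\Hom_A(\Omega X,Y))$ applied to a minimal projective cover $P_0\twoheadrightarrow X$. Because $A$ has radical square zero, the syzygies are explicit: $\Omega S_i=\bigoplus_{i\to j}S_j$ and $\Omega I_j$ is a single simple read off from $Q$, namely $\Omega I_1=S_3$, $\Omega I_2=S_2$, $\Omega I_3=S_1$. Projectivity of $A$ and injectivity of $D(A)$ reduce $\Ext^1_A(M,M)=0$ to checking $\Ext^1_A(I_j,P_k)=0$ for all $j,k$, which follows in each case either from $\Hom_A(\Omega I_j,P_k)=0$ or from surjectivity of the restriction $\Hom_A(P_0,P_k)\to\Hom_A(\Omega I_j,P_k)$. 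The same formula produces one-dimensional contributions $\Ext^1_A(I_1,S_3)$, $\Ext^1_A(I_2,S_2)$, $\Ext^1_A(I_3,S_1)$ and $\Ext^1_A(S_1,P_3)$, $\Ext^1_A(S_2,P_2)$, $\Ext^1_A(S_3,P_1)$, which supply the six required non-vanishings (again using projectivity of $A$ and injectivity of $D(A)$).

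The main obstacle is simply the bookkeeping in these Ext computations: one must correctly identify all nine indecomposables (in particular distinguishing each $P_i$ from every $I_j$ by their tops and socles) and then track which $\Hom$-spaces vanish in the cokernel. Conceptually there is nothing hidden, and the entire verification can equivalently be extracted from the Auslander--Reiten quiver of $A$.
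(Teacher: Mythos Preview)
Your proposal is correct and follows the same overall plan as the paper: enumerate the nine indecomposables, reduce $\Ext^1_A(M,M)=0$ to $\Ext^1_A(D(A),A)=0$, and then check the required vanishings and non-vanishings one by one. The execution differs slightly: the paper reads the list of indecomposables off the Auslander--Reiten quiver and handles the Ext-computations via the Auslander--Reiten formula $\Ext^1_A(D(A),A)\cong D\underline{\Hom}_A(\tau^{-}A,D(A))$ together with the identifications $\tau^{-}(P_1,P_2,P_3)=(S_3,S_2,S_1)$ and $\tau(I_1,I_2,I_3)=(S_3,S_2,S_1)$, whereas you compute everything directly from syzygies and the cokernel description of $\Ext^1$. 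Your route is a touch more elementary (no AR formula) at the price of a longer list of individual checks; conceptually the two are interchangeable, as you yourself note.

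One small wrinkle worth tightening: the separated-quiver construction for a radical-square-zero algebra yields a \emph{stable} equivalence with the hereditary algebra $\K Q^s$, not a bijection on all indecomposables, so ``two copies of $A_3$'' does not literally give nine. What you need is correct---$Q^s$ Dynkin implies representation-finiteness, and the list $\{S_i,P_i,I_i:i=1,2,3\}$ exhausts the indecomposables---but the sentence as written overstates the correspondence. Either phrase it via the stable equivalence (six non-projective indecomposables on each side, plus the three projectives of $A$) or simply verify the list directly from the AR quiver, which is what the paper does.
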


\begin{proof}
We need to show that 
\begin{align*}
    \add(M) &= \{X \in \mod-A \mid \Ext_A^1(M,X)=0\} \\
    &= \{X \in \mod-A \mid \Ext_A^1(X,M)=0 \}
\end{align*}
It is enough to show the first equality; the second follows dually by the symmetry of $Q$.

The Auslander--Reiten quiver $\Gamma(A)$ of $A$ is 
\[\begin{tikzpicture}[scale=1.2, transform shape]
\tikzstyle{nct3}=[circle, minimum width=6pt, draw=none, inner sep=0pt, scale=0.9]

\node[nct3] (A) at (0,0) {$\begin{smallmatrix} 2 \end{smallmatrix}$};
\node[nct3] (B) at (0.7,0.7) {$\begin{smallmatrix} 1 \\ 2 \end{smallmatrix}$};
\node[nct3] (C) at (0.7,-0.7) {$\begin{smallmatrix} 3 \\ 2 \end{smallmatrix}$};
\node[nct3] (D) at (1.4,0) {$\begin{smallmatrix} 1 && 3 \\ & 2 &
\end{smallmatrix}$};
\node[nct3] (E) at (2.1,0.7) {$\begin{smallmatrix} 3 \end{smallmatrix}$};
\node[nct3] (F) at (2.1,-0.7) {$\begin{smallmatrix} 1 \end{smallmatrix}$};
\node[nct3] (G) at (2.8,0) {$\begin{smallmatrix} & 2 & \\ 1 && 3
\end{smallmatrix}$};
\node[nct3] (H) at (3.5,0.7) {$\begin{smallmatrix} 2 \\ 1 \end{smallmatrix}$};
\node[nct3] (I) at (3.5,-0.7) {$\begin{smallmatrix} 2 \\ 3 \end{smallmatrix}$};
\node[nct3] (J) at (4.2,0) {$\begin{smallmatrix} 2, \end{smallmatrix}$};

\draw[->] (A) to (B);
\draw[->] (A) to (C);
\draw[->] (B) to (D);
\draw[->] (C) to (D);
\draw[->] (D) to (E);
\draw[->] (D) to (F);
\draw[->] (E) to (G);
\draw[->] (F) to (G);
\draw[->] (G) to (H);
\draw[->] (G) to (I);
\draw[->] (H) to (J);
\draw[->] (I) to (J);

\draw[loosely dotted] (A.east) -- (D);
\draw[loosely dotted] (B.east) -- (E);
\draw[loosely dotted] (C.east) -- (F);
\draw[loosely dotted] (E.east) -- (H);
\draw[loosely dotted] (F.east) -- (I);
\draw[loosely dotted] (G.east) -- (J);
\end{tikzpicture}\]
where modules are denoted using their composition series. Notice in particular that an indecomposable module is either simple or a direct summand of $M$.

In the rest of this proof we denote by $P_i$ the projective cover of $S_i$ and by $I_i$ the injective envelope of $S_i$.

To show that $\add(M) \subseteq \{X \in \mod-A \mid \Ext_A^1(M,X)=0\}$ it is enough to show that $\Ext_A^1(M,M)=0$. Since $M=A\oplus D(A)$, it is enough to show that $\Ext_A^1(D(A),A)=0$. Using the Auslander--Reiten formula \cite[Chapter IV, Theorem 2.13]{ASS}, we have
\begin{align*}
    \Ext_A^1(D(A),A) &= D\underline{\Hom}_A(\tau^{-}(A), D(A)) = D\underline{\Hom}_A(S_1\oplus S_2\oplus S_3, D(A)) = 0,
\end{align*}
where the last equality comes from the fact that 
\begin{align*}
     &\Hom_{A}(S_1,P_2) \twoheadrightarrow \Hom_{A}(S_1,I_1), &&\Hom_{A}(S_1, I_2) = 0, &&\Hom_{A}(S_1, I_3)=0,\\ 
     &\Hom_{A}(S_2, I_1) = 0, &&\Hom_{A}(S_2,P_1) \twoheadrightarrow \Hom_{A}(S_2,I_2), &&\Hom_{A}(S_2,I_3)=0,\\
    &\Hom_{A}(S_3,I_1)=0, &&\Hom_{A}(S_3, I_2) = 0,  &&\Hom_{A}(S_3,P_2) \twoheadrightarrow \Hom_{A}(S_3, I_3), 
\end{align*}
which can be immediately verified by looking at $\Gamma(A)$.

It remains to show the inclusion $\{X \in \mod-A \mid \Ext_A^1(M,X)=0\}\subseteq \add(M)$. Let $X\in\mod-A$ be such that $\Ext_A^1(M,X)=0$. By additivity of $\Ext_A^1(M,-)$, we may assume that $X$ is indecomposable. Since $\tau(I_1)=S_3$, $\tau(I_2)=S_2$ and $\tau(I_3)=S_1$, it follows that $\Ext_A^1(M,S_i)\neq 0$ for $i\in\{1,2,3\}$. Hence $X$ is not simple and so $X$ is a direct summand of $M$, which completes the proof.
\end{proof}

Alternatively we can also calculate quiver and relations of the algebra $B=\End_A(M)$ to see that $B$ is a higher Auslander algebra of global dimension $3$ and thus $M$ is a $2$-cluster tilting module by Theorem \ref{Iyamaresult}. This can be verified by a direct computation or  by using for example the GAP-package \cite{QPA}. For convenience of the reader, we give a presentation of $B$ by a quiver with relations. If $Q_B$ is the quiver
\[
\begin{tikzcd}
    {} & 5 \arrow[dl, swap, "\alpha_8"] \arrow[ddr, swap, pos=0.1, "\alpha_9"] & 3 \arrow[l, swap, shift right, "\alpha_4"] \arrow[l, shift left, "\alpha_5"] & {} \\
    1 \arrow[rrr, pos=0.2, "\alpha_2"] \arrow[dr, swap, "\alpha_1"] & {} & {} & 6, \arrow[ul, swap, "\alpha_{10}"] \\
    {} & 2 \arrow[uur, pos=0.1, "\alpha_3"] & 4 \arrow[l, "\alpha_6"] \arrow[ur, swap, "\alpha_7"] & {}
\end{tikzcd}
\]
and $J_B$ is the ideal of $\K Q_B$ given by
\begin{align*}
    J_B = \langle &\alpha_8\alpha_1-\alpha_9\alpha_6,\;\; \alpha_8\alpha_2-\alpha_9\alpha_7,\;\; \alpha_6\alpha_3-\alpha_7\alpha_{10},\;\; \alpha_1\alpha_3-\alpha_2\alpha_{10}, \\
    {} &\alpha_5\alpha_8\alpha_1,\; \alpha_5\alpha_8\alpha_2,\; \alpha_1\alpha_3\alpha_5,\; \alpha_6\alpha_3\alpha_5,\; \alpha_3\alpha_4,\; \alpha_4\alpha_8,\; \alpha_{10}\alpha_5,\; \alpha_5\alpha_9 \rangle,
\end{align*} 
then $B\cong \K Q_B/J_B$.

\begin{lemma}\label{lem:infinite complexity}
The simple $A$-module $S_2$ has infinite complexity.
\end{lemma}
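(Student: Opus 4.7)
The plan is to compute the minimal projective resolution of $S_2$ explicitly and observe that the ranks of the terms grow exponentially, which rules out any polynomial bound.

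First, I would read off the indecomposable projectives from $\Gamma(A)$: since $J^2=0$, the radical of each projective is semisimple, giving $P_1=\begin{smallmatrix}1\\2\end{smallmatrix}$, $P_2=\begin{smallmatrix}&2&\\1&&3\end{smallmatrix}$ and $P_3=\begin{smallmatrix}3\\2\end{smallmatrix}$. Hence the first syzygies of the simple modules are $\Omega(S_1)=S_2$, $\Omega(S_2)=S_1\oplus S_3$, and $\Omega(S_3)=S_2$.

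Next, I would use these relations inductively. Setting $\Omega^0(S_2)=S_2$, a straightforward induction shows
\[
\Omega^{2k}(S_2)\cong S_2^{2^k}, \qquad \Omega^{2k+1}(S_2)\cong S_1^{2^k}\oplus S_3^{2^k},
\]
for every $k\geq 0$. Consequently the $n$-th term $P_n$ of a minimal projective resolution of $S_2$ is, up to isomorphism,
\[
P_{2k}\cong P_2^{2^k},\qquad P_{2k+1}\cong (P_1\oplus P_3)^{2^k}.
\]
In particular $\dim P_n\geq 2^{\lfloor n/2\rfloor}$, so the sequence $(\dim P_n)_{n\ge 0}$ grows exponentially.

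Finally, I would observe that exponential growth is incompatible with the existence of any $b\in\mathbb{N}_0$ and $c>0$ satisfying $\dim P_n\leq c n^{b-1}$ for all $n$, so by definition $\cx(S_2)=\infty$. There is no real obstacle in this argument; the only thing to watch out for is a clean bookkeeping of the syzygy computation, which relies crucially on $J^2=0$ so that each radical is semisimple and the inductive pattern closes up.
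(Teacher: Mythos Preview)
Your proposal is correct and follows essentially the same approach as the paper: compute the syzygies $\Omega^n(S_2)$ inductively, observe their exponential growth, and conclude that no polynomial bound on $\dim P_n$ can hold. Your version is in fact slightly more explicit, since you identify each $P_n$ precisely rather than merely bounding its number of indecomposable summands from below.
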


\begin{proof}
A direct computation shows that
\[\Omega(S_2) = S_1 \oplus S_3,\;\; \Omega^2(S_2) = S_2 \oplus S_2.\]
A straightforward induction on $n\geq 0$ then shows that
\begin{equation}\label{syzygies of S2}
    \Omega^n(S_2) = \begin{cases} S_1^{\oplus 2^{\frac{n-1}{2}}}\oplus S_3^{\oplus 2^{\frac{n-1}{2}}}, &\mbox{if $n$ is odd,} \\ S_2^{\oplus 2^{\frac{n}{2}}},  &\mbox{if $n$ is even.} \end{cases}    
\end{equation} 
Let
\[\cdots \rightarrow P_n \rightarrow P_{n-1} \rightarrow \cdots \rightarrow P_1 \rightarrow P_0 \rightarrow S_2 \rightarrow 0\]
be a minimal projective resolution of $S_2$. Since $\Omega^n(S_2)$ is semisimple for any $n\geq 0$, it follows that the number of direct summands of $P_n$ is at least equal to the number of direct summands of $\Omega^n(S_2)$. By (\ref{syzygies of S2}) we have that $\Omega^n(S_2)$ has at least $2^{\frac{n}{2}}$ direct summands and so $\dim(P_n)\geq 2^{\frac{n}{2}}$. 

Now assume towards a contradiction that there exists a $b\in \mathbb{N}_0$ such that there exists a $c>0$ with $\dim P_n\leq cn^{b-1}$ for all $n\geq 0$. Then
\begin{align*}
    2^{\frac{n}{2}} \leq \dim P_n \leq cn^{b-1},\;\;\; &\text{ for all $n\geq 0$}\\
    \intertext{implies that}
    \frac{2^{\frac{n}{2}}}{n^{b-1}}\leq c,\;\;\; &\text{ for all $n\geq 1$},
\end{align*}
which is a contradiction, since $\lim\limits_{n\to\infty}\frac{2^{\frac{n}{2}}}{n^{b-1}}=\infty$. Hence no such $b$ exists and $\cx(S_2)=\infty$ which finishes the proof.
\end{proof}

With this we are ready to give our main result.

\begin{theorem}
The algebra $A$ has a $2$-cluster tilting module and there exists a simple $A$-module with infinite complexity.
\end{theorem}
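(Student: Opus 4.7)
The plan is essentially to assemble the two preceding lemmas, since together they establish exactly the two clauses of the theorem. First I would invoke Lemma \ref{lem:2-ct} to produce the explicit $2$-cluster tilting module $M = A \oplus D(A)$, which settles the first clause. Then I would invoke Lemma \ref{lem:infinite complexity} to exhibit $S_2$ as a simple $A$-module with $\cx(S_2) = \infty$, which settles the second clause. No further construction is needed.

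The only small point worth a sentence is that in the introduction the main theorem is stated for a \emph{connected} algebra; this is automatic here since the quiver $Q$ defining $A = \K Q / J^2$ is connected, and hence $A$ itself is a connected algebra. Because all of the substantive work — the verification of the two Ext-orthogonality conditions via the Auslander–Reiten quiver $\Gamma(A)$ in Lemma \ref{lem:2-ct}, and the inductive computation $\Omega^{2k}(S_2) = S_2^{\oplus 2^{k}}$ in Lemma \ref{lem:infinite complexity} — has already been carried out, there is no genuine obstacle remaining; the proof is a one-line citation of the two lemmas.
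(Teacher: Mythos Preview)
Your proposal is correct and matches the paper's own proof exactly: the theorem is deduced immediately from Lemma~\ref{lem:2-ct} and Lemma~\ref{lem:infinite complexity}. Your extra remark about connectedness is fine but not required for the theorem as stated.
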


\begin{proof}
Follows immediately by Lemma \ref{lem:2-ct} and Lemma \ref{lem:infinite complexity}.
\end{proof}

\section*{Acknowledgments} 
Ren{\'e} Marczinzik is funded by the DFG with the project number 428999796. Laertis Vaso is grateful to Max Planck Institute for Mathematics in Bonn for its hospitality and financial support. We profited from the use of the GAP-package \cite{QPA}.

\end{document}